\newtheorem{theorem}{Theorem}
\newtheorem{lemma}{Lemma}
\newtheorem*{remark}{Remark}
\newtheorem{definition}{Definition}
\newcommand{\R}{\mathbb{R}}
\newcommand*{\bigchi}{\mbox{\large$\chi$}}
\newcommand{\la}{\langle}
\newcommand{\ra}{\rangle}
\newcommand{\Lp}{L_p}
\newcommand{\Lone}{L_1}
\newcommand{\Linfty}{L_\infty}
\newcommand{\Ltwo}{L_2}
\newcommand{\Lpprime}{L_{p^\prime}}
\newcommand{\Ltwozero}{L_2^\circ}
\newcommand{\Ltwoc}{L_2^c}
\newcommand{\Ltwor}{L_2^r}
\newcommand{\Ltwoczero}{L^{\circ,c}_2}
\newcommand{\Hc}{\mathcal{H}^c}
\newcommand{\Hone}{\mathrm{H}_1}
\newcommand{\Honec}{\mathrm{H}_1^c}
\newcommand{\Honer}{\mathrm{H}_1^r}
\newcommand{\Hardyc}{\Honec(\mathcal{A})}
\newcommand{\Hardyr}{\Honer(\mathcal{A})}
\newcommand{\Hardy}{\Hone(\mathcal{A})}
\newcommand{\BMO}{\mathsf{BMO}}
\newcommand{\MMM}{\mathcal{M}}
\newcommand{\BHH}{B(H)}
\title{Molecules and Calder\'on-Zygmund operators with noncommuting kernels on $\Honec$}
\author[A.I. Cano-M\'armol]{Antonio Ismael Cano-M\'armol}
\address{Department of Mathematics \\ Baylor University \\
1301 S University Parks Dr\\
Waco, TX 76798, USA.}
\email{AntonioIsmael\_CanoMa@baylor.edu}
\begin{document}

    \subjclass[2020]{42B20, 42B35, 46L51, 46L52}

    \keywords{Hardy space, molecules, von Neumann algebra, non-commutative $L_p$ space, Calder\'on-Zygmund theory, atoms}
    
    \maketitle


    \begin{abstract}
        We study the description of semicommutative Hardy spaces in terms of molecules. We use this characterization to obtain $\Honec-\Honec$ estimates for Calderón-Zygmund operators with kernels with values in a semifinite von Neumann algebra $\MMM$.
    \end{abstract}

    \section*{Introduction}

    In this paper, we introduce sufficient conditions for the boundedness of Calder\'on-Zygmund operators with noncommuting kernels from the operator-valued version of the Hardy space $\Hone$ to itself. This complements the results which were obtained in the work by the author and Ricard \cite{CR24}, and can be framed within the theory of semicommutative Calder\'on-Zygmund operators. Let $(\mathcal{M},\tau)$ be a semifinite von Neumann algebra of operators on a separable Hilbert space, equipped with a normal semifinite faithful trace $\tau$. Denote by $\mathcal{A}$ the weak operator closure of the space of essentially bounded (strongly measurable) functions $f : \mathbb{R} \longrightarrow \mathcal{M}$ acting on $L_2(\mathbb R ; L_2(\mathcal M))$. The von Neumann algebra $\mathcal{A}$ can be identified with the tensor product $\Linfty(\mathbb{R}) \overline{\otimes} \mathcal{M}$ equipped with the trace
\begin{align*}
    \varphi(f) = \int_{\mathbb{R}} \tau(f(x)) \ dx.
\end{align*}  
We will restrict ourselves to dimension 1, even though our arguments extend trivially to any finite dimension namely for $\Linfty(\mathbb R^n) \overline \otimes \mathcal M$.

The noncommutative $\Lp$-spaces associated with $\mathcal{A}$ are indeed vector-valued $\Lp$-spaces: more clearly \cite[Chapter 3]{P98}
$$
\Lp(\mathcal{A}) = \Lp(\mathbb{R};\Lp(\mathcal{M})),
$$    
for $1\leq p<\infty$. However, in this note we will discuss the boundedness of operators on the Hardy space associated to $\mathcal{A}$. More clearly, boundedness results of the type $\Hone \longrightarrow \Hone$. This question was studied for scalar-valued functions \cite{CW77,Mey90,MeyC97} as well as for the vector-valued setting \cite{F90,H06}, where the
existence of the atomic decomposition plays an essential role. This technique does not seem to have been exploited as often in the noncommutative setting except perhaps in \cite{HLMP14} and more recently in \cite{CR24}. Mei \cite{M07} was the first to introduce the so-called \emph{operator-valued Hardy space} $\Hone(\mathbb{R},\mathcal{M})$ in this context via noncommutative equivalents of the Poisson integral, the Lusin area integral and the Littlewood-Paley $g$ function. These techniques allowed Mei to identify the dual space of $\Hone(\mathbb{R},\mathcal{M})$, which is denoted by $\BMO(\mathbb{R},\mathcal{M})$, in the spirit of the classical argument by Fefferman and Stein \cite{FS72}.

    More recently, the author and Ricard \cite{CR24} introduced an alternative definition of the operator-valued Hardy space via a ``new'' atomic decomposition of the Hardy space. A \emph{$c$-atom} is a function $a \in \Lone(\mathcal{A})$ which admits a factorization of the form $a = b h$ for some function $b : \R \rightarrow \Ltwo(\mathcal{M})$ and an norm-one operator $h \in \Ltwo(\mathcal{M})$, satisfying
        \begin{enumerate}
            \item $\mathrm{supp}_{\mathbb{R}}(b) \subseteq I$ for some interval $I$,
            \item $\displaystyle \int_{I} b = 0$,
            \item $\displaystyle \|b\|_{\Ltwo(\R;\Ltwo(\mathcal{M}))} \leq \frac{1}{\sqrt{|I|}}$.
        \end{enumerate}
        Then, the \emph{column Hardy space} $\Hardyc$ is defined to be the subspace of elements in $\Lone(\mathcal{A})$ of the form
        \begin{align*}
            \sum\limits_{i=0}^{\infty} \lambda_i a_i \ \mathrm{where \ } (\lambda_i)_i \in \ell_1 \ \mathrm{and} \ (a_i)_i \ c\mbox{-}\mathrm{atoms}
        \end{align*}
        with respect to the norm
        \begin{align*}
            \|f\|_{\Honec(\mathcal{A})} = \inf \Big\{ \sum_{i=0}^{\infty} |\lambda_i| \ : \ f = \sum_{i = 0}^{\infty} \lambda_i a_i \Big\}.
         \end{align*}
    The row space $\Hardyr$ is defined analogously via $r$-atoms of the form $a=hb$, and $\Hardy = \Hardyc + \Hardyr$. 
    
    Let $\mathcal S$ denote the set of compactly supported  essentially bounded functions $\mathbb R\to \Linfty \cap \Lone(\mathcal{M})$ (measurable with values in $L_1$). Let $T$ be a bounded operator on $\Ltwo(\mathcal{A})$ for which there exists a kernel $K : \mathbb{R} \times \mathbb{R} \ \setminus \ \{x=y\} \longrightarrow \mathcal{M}$ such that for every pair of intervals $I$, $J$ satisfying $d(I,J) > 0$, there exists $K_{I,J} \in \Linfty(I\times J; \MMM)$ such that 
    $$
    K(t) = K_{I,J}(t) \ \mbox{for any } t \in I \times J
    $$
    and
    \begin{align*}
        \int T(f)(x) g(x) \ dx = \int \int K_{I,J}(x,y) f(y) g(x) \ dx \ dy
    \end{align*}
    holds for any $f,g \in \mathcal S$ satisfying $\mathrm{supp}\|f\|_{\Ltwo(\mathcal{M})} \subset J$ and $\mathrm{supp}\|g\|_{\Ltwo(\mathcal{M})} \subset I$. More technical details on this definition can be found in \cite{CR24}. Under these assumptions, we say that $T$ is a \emph{Calder\'on-Zygmund operator with kernel $K$}. Moreover, if $T$ fulfills a right-modularity condition, that is,
    \begin{align*}
        T(fh) = T(f)h
    \end{align*}
    for any $f \in \Ltwo(\mathcal{A})$ with compact support and $h \in \mathcal{M}$, we say that $T$ is a \emph{left Calder\'on-Zygmund operator}. The main result in \cite{CR24} states that whenever the kernel $K$ satisfies the \emph{Hörmander condition}, that is, that for some $\lambda>0$ there holds
    \begin{align}\label{eq:hormanderCondition}
        \int_{|x-y| \geq \lambda |y^\prime -y|} \|K(x,y)-K(x,y^\prime)\|_{\mathcal{M}} \ dx < \infty,
    \end{align}        
    then $T$ is bounded from $\Honec(\mathcal{A})$ to $\Lone(\mathcal{A})$. As a straightforward consequence, there follows that the Hardy space $\Hone(\mathcal{A})$ coincides with the one introduced by Mei \cite{M07}.

      Nonetheless, the condition \eqref{eq:hormanderCondition} is not sufficient to prove the boundedness of Calderón-Zygmund operators from $\Hardyc$ to itself. Instead, a stronger assumption is required, namely the \textit{Lipschitz condition}: there exists some $\lambda > 0$ and $\gamma \in (1/2,1]$ such that
    \begin{align}\label{eq:smoothnessCondition}
        \|K(x,y)-K(x,y^\prime)\|_{\mathcal{M}} \lesssim \frac{|y^\prime -y|^\gamma}{|x-y|^{1+\gamma}} \mbox{ whenever } |y^\prime - y| \leq \frac{|x-y|}{\lambda}.
    \end{align}

     \begin{theorem}\label{thm:CZH1c}
        Let $\mathcal{M}$ be a von Neumann algebra. Let $T$ be a left Calder\'on-Zygmund operator with associated kernel $K : \mathbb{R} \times \mathbb{R} \ \setminus \ \{x = y\} \longrightarrow \mathcal{M}$. If $K$ satisfies the Lipschitz condition \eqref{eq:smoothnessCondition} and $\int_{\R} T(b) = 0$ for every $c$-atom $a=bh$, then $T$ extends to a bounded operator from $\Hardyc$ to $\Hardyc$.
    \end{theorem}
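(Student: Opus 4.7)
The plan is to prove Theorem~\ref{thm:CZH1c} by the classical atom-to-molecule strategy adapted to the noncommutative setting, leveraging the molecular characterization of $\Hardyc$ developed earlier in the paper. By linearity and density, it suffices to establish a uniform bound $\|T(a)\|_{\Hardyc} \lesssim 1$ across all $c$-atoms $a = bh$ supported on an interval $I$ of center $c_I$. Left-modularity of $T$ gives $T(a) = T(b)h$, so the matter reduces to showing that $g := T(b) \in \Ltwo(\R; \Ltwo(\MMM))$ is a $c$-molecule centered at $I$: global mean zero together with geometric $\Ltwo$-decay across dyadic dilates of $I$.

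Mean zero is exactly the hypothesis $\int_\R T(b) = 0$. For the decay, I partition $\R$ into $A_0 = \lambda I$ and $A_k = \lambda 2^k I \setminus \lambda 2^{k-1} I$ for $k \geq 1$, with $\lambda$ as in \eqref{eq:smoothnessCondition}. The $\Ltwo(\Avna)$-boundedness of $T$ together with $\|b\|_{\Ltwo(\R;\Ltwo(\MMM))} \leq |I|^{-1/2}$ immediately yields $\|g \chi_{A_0}\|_{\Ltwo(\R;\Ltwo(\MMM))} \lesssim |I|^{-1/2}$. For $k \geq 1$, the cancellation $\int_I b = 0$ permits us to rewrite
$$
g(x) = \int_I \bigl( K(x,y) - K(x, c_I) \bigr)\, b(y)\, dy,
$$
and the Lipschitz bound applies since $|x - c_I| \geq \lambda 2^{k-1} |I| \geq \lambda |y - c_I|$ for $y \in I$. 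Passing to $\MMM$-norms and using Cauchy-Schwarz on $b$ yields $\|g(x)\|_{\Ltwo(\MMM)} \lesssim |I|^{\gamma} |x - c_I|^{-(1+\gamma)}$; integrating over $A_k$ then gives $\|g \chi_{A_k}\|_{\Ltwo(\R;\Ltwo(\MMM))} \lesssim 2^{-k\gamma} |2^k I|^{-1/2}$, providing summable decay.

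The main obstacle I anticipate is matching this decay profile precisely with the molecule definition used in the paper. The requirement $\gamma > 1/2$ in \eqref{eq:smoothnessCondition} suggests the molecular norm incorporates stronger integrability than plain $\Ltwo$-decay on annuli---possibly $L_p$-decay in the column sense for some $p > 2$---so the computation above may need to be sharpened by estimating $\|g\chi_{A_k}\|_{L_p}$ via H\"older against $|A_k|^{1/2-1/p}$. Alternatively, if the molecular characterization still demands that each annular piece be individually mean zero, the standard telescoping correction replaces $g \chi_{A_k}$ by $g \chi_{A_k} - |A_k|^{-1}\bigl(\int_{A_k} g\bigr)\chi_{A_k}$ and absorbs the resulting constants into a second atomic series via $\sum_k \int_{A_k} g = 0$. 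In any case, right-multiplication by $h$ preserves the $c$-atomic or molecular structure, so summing the decompositions and invoking the characterization completes the proof.
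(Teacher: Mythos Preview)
Your core strategy---split near/far, use $\Ltwo$-boundedness locally and Lipschitz plus cancellation of $b$ on the far region---is exactly the paper's approach (carried out in Lemma~\ref{lem:vectorMolecule}, which then feeds into the proof of the theorem via the inequality $\|T(b)h\|_{\Hardyc} \le \|T(b)\|_{\Hone(\R;\Ltwo(\MMM))}$). The only difference is cosmetic: the paper integrates directly over $(\lambda I)^c$ rather than decomposing into dyadic annuli.

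Your anticipated obstacle evaporates once you see the actual molecular norm: an $\Ltwo(\MMM)$-valued molecule centered at $c_I$ of width $d$ is a mean-zero $g$ with
\[
\int_{\R} \|g(x)\|_{\Ltwo(\MMM)}^2 \Big(1 + \frac{|x-c_I|^2}{d^2}\Big)\,dx \le d^{-1}.
\]
Your dyadic estimate $\|g\chi_{A_k}\|_{\Ltwo} \lesssim 2^{-k\gamma}|2^k I|^{-1/2}$ gives $\int_{A_k}\|g\|^2\,|x-c_I|^2 d^{-2}\,dx \lesssim 2^{k(1-2\gamma)}|I|^{-1}$, which sums precisely when $\gamma > 1/2$; that is the true source of the threshold, not any $\Lp$-integrability with $p>2$. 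The molecular definition requires only global mean zero, so no annular telescoping correction is needed either.
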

    
    This result heavily relies on the connection between the theory of vector-valued Hardy spaces and the semicommutative Hardy space $\Honec(\mathcal{A})$. More clearly, it is based on the decomposition of $\Honec(\mathcal{A})$ into column-valued versions of \emph{molecules}, which have been widely studied in the classical setting \cite{CW77, Mey90, MeyC97}. Analogous statements follow for right-Calder\'on-Zygmund operators on $\Honer(\mathcal{A})$ and the vector-valued setting. Therefore, operators with scalar-valued kernels satisfying both modularity conditions happen to be bounded on the full Hardy space $\Hone(\mathcal{A})$.

    \section{Preliminaries}

    \subsection*{Noncommutative spaces $\Lp(\mathcal{M};\Ltwoc(\Omega))$} Let $H$ be a separable Hilbert space. Let $\mathds{1}$ be a norm-one element in $H$, and let $p_{\mathds{1}} = \mathds{1} \otimes \overline{\mathds{1}}$ denote the rank-one projection onto $\mathrm{span}\{\mathds{1}\}$. Given $0 < p \leq \infty$, we define the \emph{column $H$-valued $\Lp$ space} as
    \begin{align*}
        \Lp(\mathcal{M};H^c) = \Lp(\mathcal{M} \overline{\otimes} B(H))(\mathbf{1}_{\mathcal{M}} \otimes p_{\mathds{1}})
    \end{align*}
    and the \emph{row $H$-valued $\Lp$ space} as
    \begin{align*}
        \Lp(\mathcal{M};H^{*r}) = (\mathbf{1}_{\mathcal{M}} \otimes p_{\mathds{1}})\Lp(\mathcal{M} \overline{\otimes} B(H)).
    \end{align*}
    Identify $\Lp(\MMM)$ as a subspace of $\Lp(\MMM \overline{\otimes} \BHH)$ via the map $m \mapsto m \otimes p_{\mathds{1}}$. This is equivalent to the identity
    \begin{align*}
        \Lp(\mathcal{M}) = (\mathbf{1}_{\mathcal{M}} \otimes p_{\mathds{1}}) \Lp(\mathcal{M} \overline{\otimes} B(H)) (\mathbf{1}_{\mathcal{M}} \otimes p_{\mathds{1}}).
    \end{align*}
    Thus, given an element $f$ in $\Lp(\mathcal{M};H^c)$, then $f^* f \in L_{p/2}(\MMM)$, which justifies defining
    \begin{align*}
        \|f\|_{\Lp(\mathcal{M};\Hc)} = \|f\|_{\Lp(\mathcal{M}\overline{\otimes} B(H))}=\|(f^*f)^{1/2}\|_{\Lp(\mathcal{M})}.
    \end{align*}
    Analogously, if $f \in \Lp(\MMM;H^{*r})$, then $ff^* \in L_{p/2}(\MMM)$, which enables us to set $\|f\|_{\Lp(\MMM;H^{*r})} = \|f^*\|_{\Lp(\MMM;H^c)}$. We will use without reference that the algebraic tensor $\Lp(\mathcal M)\otimes H$ is dense (resp. weak$^*$ dense) in $\Lp(\mathcal{M};H^c)$ for $1\leq p<\infty$ (resp. $p=\infty$) and similarly for rows.

    Column and row Hilbert-valued $\Lp$-spaces satisfy the expected duality relations expressed via the natural duality bracket 
    \begin{align}
      \la f, g\ra_{r,c} = \mathrm{Tr\otimes \tau }(fg)
    \end{align}
    where $\mathrm{Tr}$ denotes the trace of $B(H)$. More clearly, there holds linearly isometrically
    \begin{align*}
        \Lp(\mathcal{M};\Hc)^* = \Lpprime(\mathcal{M};H^{*r}) \ \mathrm{and} \ \Lp(\mathcal{M};H^{*r})^* = \Lpprime(\mathcal{M};\Hc).
    \end{align*}
    for any $1 \leq p < \infty$ whenever $1/p + 1/p^\prime =1$.

    Let $(\Omega,\mu)$ be a $\sigma$-finite measure space. A remarkable setting for noncommutative Hilbert-valued column/row $\Lp$-spaces is the case $H = \Ltwo(\Omega):= \Ltwo(\Omega,\mu)$. Identifying $\Ltwo(\Omega,\mu)^*$ and $\Ltwo(\Omega,\mu)$ and using the bilinear pairing $(f,g)\mapsto \int_\Omega fg \ d\mu$ yields the following duality identity
    \begin{align*}
        \Lpprime(\mathcal{M};\Ltwor(\Omega,\mu)) = \Lp(\mathcal{M};\Ltwoc(\Omega,\mu))^* \ \mathrm{for \ } 1 \leq p < \infty.
    \end{align*}
    Moreover, for $F=\sum_{i=1}^n m_i\otimes f_i \in \Lp(\mathcal M)\otimes
    L_2(\Omega)$ with $p<\infty$:
    \begin{align}\label{eq:columnLpOmNorm}
        \|F\|^p_{ \Lp(\mathcal{M};\Ltwoc(\Omega,\mu))} = \tau \Big(\int_\Omega \Big|\sum_{i=1}^n f_i(t) m_i\Big|^2 d\mu \Big)^{p/2}  
    \end{align}
    In this work, only the case $p=1$ will be relevant. It turns out that
    $$
    \Lone(\mathcal{M};\Ltwoc(\Omega,\mu)) \subseteq \Ltwo(\Omega;\Lone(\MMM))
    $$
    so that the former space can be identified with a.e. Bochner measurable functions from $\Omega$ to $\Lone(\MMM)$. Moreover, according to the discussion above, $\Lone(\MMM) \otimes \Ltwo(\Omega)$ is dense in $\Lone(\mathcal{M};\Ltwoc(\Omega,\mu))$ with respect to the topology given by \eqref{eq:columnLpOmNorm} for $p=1$.

    \subsection*{Vector-valued molecules} In the classical setting, \emph{molecules} arose as convenient objects to prove that bounded linear operators $T: \Ltwo(\R) \rightarrow \Ltwo(\R)$ admit a continuous extension from $\Hone(\R)$ to itself. This was first noticed by Coifman and Weiss \cite{CW77}, and studied by Meyer and Coifman \cite{MeyC97,Mey90} in the context of Calderón-Zygmund operators. An alternative definition of the Hardy space $\Hone$ via molecules is still present in the context of Bochner measurable functions (see \cite{H06} or \cite[Appendix A]{CR24}). 
    
    For our purposes, it will be enough to consider the case of Bochner measurable functions with values in the Hilbert space $\Ltwo(\MMM)$. The $\Ltwo(\MMM)$-valued Hardy space $\Hone(\R;\Ltwo(\MMM))$ is the subspace of functions $f$ in $\Lone(\R;\Ltwo(\MMM))$ admitting an expression of the form
    \begin{align*}
        f = \sum_{i=0}^{\infty} \lambda_i b_i
    \end{align*}
    where $(\lambda_i)_i \in \ell_1$ and each $b_i$ is an $\Ltwo(\MMM)$-valued atom satisfying the conditions
    \begin{align*}
        \mathrm{supp}_{\R}(b_i) \subseteq I_i, \quad \int b_i = 0, \quad \|b_i\|_{\Ltwo(\R;\Ltwo(\MMM))} \leq \frac{1}{\sqrt{|I_i|}}
    \end{align*}
    for some finite interval $I_i$. Set $\omega(x) = 1+x^2$ and consider the spaces
    \begin{align*}
        \Ltwo(\R,\omega \ dx; \Ltwo(\MMM)) = \big\{ f \in L_0(\R;\Ltwo(\MMM)) \ : \ \int_{\R} \|f(x)\|^2_{\Ltwo(\MMM)} \ \omega(x) \ dx \ < \infty \big\}
    \end{align*}
    and 
    \begin{align*}
        \Ltwo^{\circ}(\R,\omega \ dx; \Ltwo(\MMM)) = \big\{f \in \Ltwo(\R, \omega \ dx ; \Ltwo(\MMM)) \ : \ \int_{\R} f = 0 \big\}.
    \end{align*}
    Then, an \emph{$\Ltwo(\MMM)$-valued molecule} is defined to be a function $f$ in $\Ltwo^{\circ}(\R,\omega \ dx; \Ltwo(\MMM))$ which is normalized by
    \begin{align*}
        \Big( \int_{\R} \|f(x)\|_{\Ltwo(\MMM)}^2 \ \Big(1 + \frac{|x-x_0|^2}{d^2}\Big) \ dx \Big)^{1/2} \leq d^{-1/2}.
    \end{align*}
    Following the argument by Meyer \cite{Mey90}, it can be proved that there is a continuous injection with dense range
    \begin{align}\label{eq:mapQ}
         Q: \Ltwo^\circ( \R, (1+x^2) dx;\Ltwo(\mathcal{M}))  \longrightarrow  \Hone(\R;\Ltwo(\mathcal{M}))
    \end{align}
    which sends each $F$ in $\Ltwo^\circ( \R, (1+x^2) dx;\Ltwo(\mathcal{M}))$ to an atomic decomposition $\sum_{i=0}^\infty \lambda_i b_i$. The invariance by translation and the homogeneity by homotheties of the $\Hone(\R;\Ltwo(\MMM))$-norm implies that the norm in this space of any molecule is bounded by a universal constant. Ultimately, this implies that one can use molecules instead of atoms in the definition of $\Hone(\R;\Ltwo(\MMM))$.

    \section{$c$-molecules and proof of Theorem~\ref{thm:CZH1c}}

    
    It seems that the proof of the main result of this paper should follow the same scheme as in the classical setting \cite{MeyC97}. That is, one should try to prove that $T$ sends $c$-atoms to a column version of molecules. This is made possible due to a \textit{partial} link between the vector-valued and the semicommutative theory. Indeed, it was proved in \cite{CR24} that the operator $Q$ from \eqref{eq:mapQ} extends to a bounded injective map with dense range
    \begin{align}\label{eq:tildeQ}
         \widetilde{Q}: \Lone(\mathcal{M};\Ltwoczero(\mathbb{R},(1+x^2) dx))  \longrightarrow  \Hardyc
    \end{align}
    defined by the identity
    \begin{align*}
        \widetilde{Q}(Fh):= Q(F)h
    \end{align*}
    for any $h \in \Ltwo(\mathcal{M})$ and $F \in \Ltwo^\circ(\mathbb{R},(1+x^2)dx) \otimes \Ltwo(\mathcal{M})$. Recall that the range of $\widetilde{Q}$ is actually dense in $\Hardyc$ since it contains all the $c$-atoms of the form $a=bh$ with $b \in \Ltwo(\mathbb{R}) \otimes \Ltwo(\mathcal{M})$.


    \begin{definition}
        A $c$-molecule $f$ in $\Hardyc$, centered at $x_0$ and of width $d > 0$, is defined to be a function such that $f = \widetilde{Q}(F)$ for some $F$ in $\Lone(\mathcal{M};\Ltwoczero(\mathbb{R},(1+x^2) dx))$ satisfying
        \begin{align*}
            \|d \cdot F(d x +x_0)\|_{\Lone(\mathcal{M};\Ltwoczero(\mathbb{R},(1+x^2)dx))} \leq 1,
        \end{align*}        
        or, equivalently,
        \begin{align*}
            \|F\|_{\Lone(\mathcal{M};\Ltwoczero(\mathbb{R},(1+\frac{|x-x_0|^2}{d^2}) dx))} \leq d^{-1/2}.
        \end{align*}
    \end{definition}
    

    Given a $c$-molecule centered at $x_0$ and of width $d$,
    \begin{align*}
        \|f\|_{\Hardyc} = \|d \cdot f(dx+x_0)\|_{\Hardyc} \lesssim \|d \cdot F(dx+x_0)\|_{\Lone(\mathcal{M};\Ltwoczero(\mathbb{R},(1+x^2)dx))},
    \end{align*}
    so there follows that the $\Honec$-norm of any $c$-molecule is bounded by a universal constant. Therefore, $c$-atoms can be replaced by $c$-molecules in the definition of the $\Hardyc$-norm, yielding
    \begin{align}\label{eq:H1cMoleculeNorm}
        \|f\|_{\Hardyc} \simeq \inf\Big\{ \sum_{i=0}^\infty |\lambda_i| \ : f = \sum_{i=0}^\infty \lambda_i f_i \ \mbox{in} \ \Lone(\mathcal{A}), \ (\lambda_i)_i \in \ell_1, \ (f_i)_i \ c\mbox{-molecules}\Big\}.
    \end{align}

    As mentioned above, Theorem~\ref{thm:CZH1c} relies on the connection between the maps $Q$ and $\widetilde{Q}$, which allows us to reduce the problem to studying the boundedness of Calderón-Zygmund operators with operator-valued kernel from $\Hone(\mathbb{R};\Ltwo(\mathcal{M}))$ to itself.
    

    \begin{lemma}\label{lem:vectorMolecule}
        Let $\mathcal{M}$ be a von Neumann algebra. Let $T$ be a left Calderón-Zygmund operator which is bounded on $\Ltwo(\mathbb{R};\Ltwo(\mathcal{M}))$ and has associated kernel
        \begin{align*}
            K : \mathbb{R} \times \mathbb{R} \ \setminus \ \{x = y \} \longrightarrow \mathcal{M}.
        \end{align*}
        Assume that $K$ satisfies the Lipschitz condition \eqref{eq:smoothnessCondition}. Then, $T$ extends to a map from $\Hone(\mathbb{R};\Ltwo(\mathcal{M}))$ to itself if and only if $\int T(b) = 0$ for every $\Ltwo(\MMM)$-valued atom $b$.
    \end{lemma}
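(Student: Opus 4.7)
The plan is to follow the classical scheme of Meyer and Coifman \cite{MeyC97}: show that $T$ sends each $\Ltwo(\MMM)$-valued atom to a universal constant times an $\Ltwo(\MMM)$-valued molecule, and then invoke the molecular characterization of $\Hone(\R;\Ltwo(\MMM))$ recalled in the preliminaries to deduce $\|T(b)\|_{\Hone(\R;\Ltwo(\MMM))} \lesssim 1$ uniformly over atoms $b$. The forward direction is immediate, since any $f \in \Hone(\R;\Ltwo(\MMM))$ is an $\Lone$-limit of finite combinations of atoms, each of which has vanishing Bochner integral; in particular, if $T$ extends to $\Hone(\R;\Ltwo(\MMM))$, then $\int T(b) = 0$ in $\Ltwo(\MMM)$ for every atom $b$.

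For the converse, using the translation and dilation invariance of the $\Hone(\R;\Ltwo(\MMM))$-norm, I reduce to the case where $b$ is supported in $I = [-1/2, 1/2]$, has $\int b = 0$, and satisfies $\|b\|_{\Ltwo(\R;\Ltwo(\MMM))} \leq 1$. The target estimate becomes
\begin{align*}
\int_{\R} \|T(b)(x)\|_{\Ltwo(\MMM)}^2 \, (1 + x^2)\, dx \lesssim 1,
\end{align*}
which I split as $|x| \leq C$ versus $|x| > C$ for some $C = C(\lambda) > 1$. On the near region, the weight is $O(1)$ and the $\Ltwo(\R;\Ltwo(\MMM))$-boundedness of $T$ gives $\int_{|x| \leq C} \|T(b)(x)\|_{\Ltwo(\MMM)}^2\, dx \lesssim 1$ directly.

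For the far region $|x| > C$, the points $x$ and $I$ are separated, so the kernel representation is available; using the cancellation $\int b = 0$ I write
\begin{align*}
T(b)(x) = \int_I \bigl(K(x,y) - K(x,0)\bigr)\, b(y)\, dy,
\end{align*}
and the contractive left action of $\MMM$ on $\Ltwo(\MMM)$, together with the Lipschitz condition \eqref{eq:smoothnessCondition} (applicable since $|y| \leq 1/2 \leq |x|/\lambda$) and Cauchy--Schwarz, yields
\begin{align*}
\|T(b)(x)\|_{\Ltwo(\MMM)} \lesssim \int_I \frac{|y|^\gamma}{|x|^{1+\gamma}}\, \|b(y)\|_{\Ltwo(\MMM)}\, dy \lesssim |x|^{-1-\gamma}.
\end{align*}
Integrating the square against $x^2$ produces $\int_{|x| > C} |x|^{-2\gamma}\, dx$, which is finite \emph{precisely because $\gamma > 1/2$}. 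Together with $\int T(b) = 0$, this identifies $T(b)$ as an absolute constant times an $\Ltwo(\MMM)$-valued molecule centered at $0$ of width $1$.

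Finally, for $f = \sum_i \lambda_i b_i \in \Hone(\R;\Ltwo(\MMM))$, the atomic bound produces $\sum_i |\lambda_i|\, \|T(b_i)\|_{\Hone(\R;\Ltwo(\MMM))} \lesssim \sum_i |\lambda_i| < \infty$, so I set $T(f) := \sum_i \lambda_i T(b_i)$, the series converging absolutely in $\Hone(\R;\Ltwo(\MMM))$; the $\Ltwo$-boundedness of $T$ on the dense intersection of $\Ltwo$ and atomic finite sums makes this extension independent of the decomposition. The main obstacle is the far-region estimate: the threshold $\gamma > 1/2$ is exactly what makes the weighted integral converge, which is why the Hörmander condition \eqref{eq:hormanderCondition} is insufficient here and the stronger Lipschitz condition must be assumed.
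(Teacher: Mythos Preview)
Your proof is correct and follows essentially the same route as the paper: split into a near region (handled by $\Ltwo$-boundedness) and a far region (handled by the kernel representation, the cancellation $\int b=0$, and the Lipschitz bound), then observe that $\gamma>1/2$ makes the weighted integral $\int |x|^{-2\gamma}\,dx$ converge. The only cosmetic differences are that the paper keeps the general center $x_0$ and width $d$ rather than normalizing via dilation/translation, and that it phrases the far-region bound through a duality pairing with test functions $g\in\mathcal S$ (first for $b\in\mathcal S$, then extending by approximation) rather than estimating $\|T(b)(x)\|_{\Ltwo(\MMM)}$ directly as you do.
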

    \begin{proof}    
        The approximation argument for singular kernels which was introduced in \cite{CR24} can be adapted to prove that the Calderón-Zygmund operator $T$ is well-defined on the whole $\Hone(\R,\Ltwo(\MMM))$. Let $b$ be a $\Ltwo(\MMM)$-valued atom. Assume that $\mathrm{supp}_{\R}(b) \subset I$ for some interval $I$ centered at $x_0$ with radius $d$, and let $\lambda I$ be the interval centered at $x_0$ with radius $\lambda d$. Then, 
        \begin{align*}
            \Bigg( \int_{\R} \|T(b)\|_{\Ltwo(\mathcal{M})}^2 \Big( 1 + \frac{|x-x_0|^2}{d^2} \Big) \ dx \Bigg)^{1/2} &\lesssim \Bigg( \int_{\lambda I} \|T(b)\|_{\Ltwo(\mathcal{M})}^2 \ dx \Bigg)^{1/2} \\
            &+ \Bigg( \int_{(\lambda I)^c} \|T(b)\|_{\Ltwo(\mathcal{M})}^2 \frac{|x-x_0|^2}{d^2} \ dx \Bigg)^{1/2}.
        \end{align*}
        The first integral can be bounded as a consequence of the boundedness of $T$ on $\Ltwo(\mathcal{A})$, that is,
        \begin{align*}
            \Bigg( \int_{\lambda I} \|T(b)\|_{\Ltwo(\mathcal{M})}^2 \ dx \Bigg)^{1/2}
            &\leq \|T(b)\|_{\Ltwo(\mathcal{A})} \leq \|T\| \|b\|_{\Ltwo(\mathcal{A})} \leq \frac{\|T\|}{2^{1/2}} \ d^{-1/2}
        \end{align*}
        On the other hand,
        \begin{align*}
            \Big\|T(b) \bigchi_{(\lambda I)^c} \frac{|x-x_0|}{d} \Big\|_{\Ltwo(\R;\Ltwo(\mathcal{M}))} = \sup_g \Big| \tau \int T(b) \ \frac{|x-x_0|}{d} \  g \Big|
        \end{align*}
        where the supremum is taken over $g \in \mathcal{S}$ supported on $(\lambda I)^c$ such that $\|g\|_{\Ltwo((\lambda I)^c;\Ltwo(\mathcal{M}))} \leq 1$. Assume for the moment that $b \in \mathcal{S}$. Then, there holds
        \begin{align*}
            \Big\|T(b) &\bigchi_{(\lambda I)^c} \ \frac{|x-x_0|}{d}\Big\|_{\Ltwo(\R,\Ltwo(\mathcal{M}))} \\
            &= \sup_g \Big| \tau \int_{(\lambda I)^c} \int_{I} K_{(\lambda I)^c,I}(x,y) \ b(y) \ \frac{|x-x_0|}{d} \ g(x) \ dx \ dy \Big|.
        \end{align*}
        The $c$-atom $b$ having integral zero enables us to write
        \begin{align*}
            \Big\|T(b) &\bigchi_{(\lambda I)^c} \ \frac{|x-x_0|}{d}\Big\|_{\Ltwo(\R,\Ltwo(\mathcal{M}))}\\  &= \sup_g \Big| \tau \int_{(\lambda I)^c} \int_{I} (K_{(\lambda I)^c,I}(x,y) - K_{(\lambda I)^c,I}(x,x_0)) \ b(y) \ \frac{|x-x_0|}{d} \ g(x) \ dx \ dy \Big| \\
            &= \Big\| \frac{|x-x_0|}{d} \int_I (K_{(\lambda I)^c,I}(x,y) - K_{(\lambda I)^c,I}(x,x_0)) \ b(y) \ dy\Big\|_{\Ltwo((\lambda I)^c;\Ltwo(\mathcal{M}))} \\
            &= \Bigg( \int_{(\lambda I)^c} \Big\| \int_{I}  (K_{(\lambda I)^c,I}(x,y) - K_{(\lambda I)^c,I}(x,x_0)) \ b(y) \ dy \Big\|_{\Ltwo(\mathcal{M})}^2 \ \ \frac{|x-x_0|^2}{d^2} \ \ dx \Bigg)^{1/2}.
        \end{align*}
        The Lipschitz condition \eqref{eq:smoothnessCondition} implies that for a.e. $x$,
        \begin{align*}
            \Big\| \int_{I}  (K_{(\lambda I)^c,I}(x,y) &- K_{(\lambda I)^c,I}(x,x_0)) \ b(y) \ dy \Big\|_{\Ltwo(\mathcal{M})} \\ &\leq  \int_{I} \| (K_{(\lambda I)^c,I}(x,y) - K_{(\lambda I)^c,I}(x,x_0)) \ b(y) \|_{\Ltwo(\mathcal{M})} \ dy \\
            &\leq \int_I \|K_{(\lambda I)^c,I}(x,y) - K_{(\lambda I)^c,I}(x,x_0)\|_{\mathcal{M}} \ \|b(y)\|_{\Ltwo(\mathcal{M})} \ dy \\
            &\lesssim \int_I \frac{|x_0 - y|^\gamma}{|x-x_0|^{1+\gamma}} \ \|b(y)\|_{\Ltwo(\mathcal{M})} \ dy \\
            &\leq |x-x_0|^{-1-\gamma} \ d^\gamma.
        \end{align*}
        Therefore, 
        \begin{align*}
            \Bigg( \int_{(\lambda I)^c} \|T(b)\|_{\Ltwo(\mathcal{M})}^2 &\frac{|x-x_0|^2}{d^2} \ dx \Bigg)^{1/2} \leq \Bigg( \int_{(\lambda I)^c} |x-x_0|^{-2\gamma} d^{2\gamma -2} \ dx \Bigg)^{1/2} \\
            &= \Bigg( \int_{(-\lambda,\lambda)^c} |x|^{-2\gamma} \ dx \Bigg)^{1/2} \ d^{-1/2} = C_{\lambda,\gamma} \ d^{-1/2}.
        \end{align*}
        Notice that $\gamma \in (1/2,1]$ implies that the constant $C_{\lambda,\gamma}$ is finite. The same bound holds for any $b \in \Ltwo(\mathcal{A})$ by a standard approximation argument (see \cite[Lemma 3.2]{CR24}). Finally, it is clear that $\int T(b) = 0$ follows by hypothesis, so $T(b)$ is proven to be a $\Ltwo(\mathbb{R};\Ltwo(\mathcal{M}))$-molecule. Reciprocally, if $T$ is bounded from $\Hone(\mathbb{R};\Ltwo(\mathcal{M}))$ to itself, then $T(b)$ must have zero integral.
    \end{proof}

    Now, we are ready to prove the main result of this paper.

    \begin{proof}[Proof of Theorem \ref{thm:CZH1c}]    
        Let $a=bh$ be a $c$-atom. Then, $T(a) = T(b)h$ holds in $\Lone(\mathcal{A})$ and is well-defined without regard to the decomposition $a=bh$ for $a$ \cite[Theorem 3.5]{CR24}. It is clear that $\int_{\mathbb{R}} T(b) = 0$ holds, and Lemma~\ref{lem:vectorMolecule} implies that there exists a $\Ltwo(\mathcal{M})$-valued molecule $F$ centered at $x_0$ with width $d$ such that $Q(F)h=T(b)h$. Moreover, whenever $F$ is in $\Ltwozero(\R,(1+x^2) dx) \otimes \Ltwo(\mathcal{M})$, then $\widetilde{Q}(F h) = T(b)h$ and 
        \begin{align*}
            \|\widetilde{Q}(Fh)\|_{\Lone(\mathcal{M};\Ltwoczero(\mathbb{R},(1+\frac{|x-x_0|^2}{d^2}) dx))} \leq \|F\|_{\Ltwo(\mathcal{M};\Ltwoczero(\mathbb{R},(1+\frac{|x-x_0|^2}{d^2}) dx))} \|h\|_{\Ltwo(\mathcal{M})} \leq d^{-1/2},
        \end{align*}
        so $T(b)h$ is a $c$-molecule. However, this does not happen in general, but proving that the expression $\|T(a)\|_{\Honec(\mathcal{A})}$ is bounded by a universal constant for any $c$-atom is enough. Indeed, Lemma~\ref{lem:vectorMolecule} yields that 
        \begin{align*}
            \|T(b)h\|_{\Hardyc} \leq \|T(b)\|_{\Hone(\R;\Ltwo(\mathcal{M}))} \lesssim 1.
        \end{align*}
        Therefore, the equivalence of norms
        \begin{align*}
            \|f\|_{\Hardyc} \simeq \inf\Big\{ \sum_{i=0}^{\infty} |\lambda_i| \ : \ f = \sum_{i=0}^\infty \lambda_i f_i \ , \ (\lambda_i)_i \in \ell_1, \ \|f_i\|_{\Hardyc} \leq 1 \Big\}.
        \end{align*}
        implies the statement of the theorem.
    \end{proof}

    \begin{remark}
        The map $Q$ in \eqref{eq:mapQ} induces a bilinear form which yields the extension map
        \begin{align*}
            \breve{Q} : \Ltwo^\circ( \R, (1+t^2) dt;\Ltwo(\mathcal{M})) \ \widehat{\otimes}_{\pi} \ \Ltwo(\mathcal{M}) \longrightarrow \Honec(\mathcal{A})
        \end{align*}
        which satisfies $\breve{Q}(F \otimes h) = Q(F)h$ for every $F \in \Ltwo^\circ( \R, (1+t^2) dt;\Ltwo(\mathcal{M}))$ and $h \in \Ltwo(\MMM)$. The map $\breve{Q}$ can be proved to be a bounded linear operator with dense range (see \cite{CR24}), and provides an alternative definition of $c$-molecules such that any left Calderón-Zygmund operator sends $c$-atoms to $c$-molecules. We have chosen to use the map $\widetilde{Q}$ as in \eqref{eq:tildeQ} instead of $\breve{Q}$ since the former was used to prove the $\Hone$-$\BMO$ duality in \cite{CR24}, although $\breve{Q}$ can be shown to provide such a result as well. 
    \end{remark}

    \bibliographystyle{amsplain}
    \bibliography{biblio}

\end{document}